\newtheorem{thm}{Theorem}
\newtheorem{ass}[thm]{Assumption}
\definecolor{DarkBlue}{rgb}{0,0,0.5451}
\definecolor{DarkGreen}{rgb}{0,0.39216,0}
\definecolor{LightYellow}{rgb}{1,1,.8}
\definecolor{orange}{rgb}{.9,0.3445,0}
\let\oldc\c
\let\oldnorm\|
\renewcommand{\|}{|\!|}         
\newcounter{algo}
\DeclareMathOperator{\sinc}{sinc}
\newcommand{\cD}{{\cal{D}}}
\newcommand{\IC}{{\mathbb{C}}}
\newcommand{\IN}{{\mathbb{N}}}
\newcommand{\IR}{{\mathbb{R}}}
\newcommand{\btheta}{{\boldsymbol{\theta}}{}}
\newcommand{\bSigma}{{\boldsymbol{\Sigma}}{}}
\newcommand{\A}{{\mathbf{A}}}
\newcommand{\C}{{\mathbf{C}}}
\newcommand{\bfM}{{\mathbf{M}}}
\newcommand{\bfT}{{\mathbf{T}}} 
\newcommand{\1}{{\mathbf{1}}}
\renewcommand{\c}{{\textbf{\textit{c}}}}
\newcommand{\f}{{\textbf{\textit{f}}}}
\newcommand{\h}{{\textbf{\textit{h}}}}
\newcommand{\s}{{\textbf{\textit{s}}}}
\renewcommand{\v}{{\textbf{\textit{v}}}}
\newcommand{\w}{{\textbf{\textit{w}}}}
\newcommand{\x}{{\textbf{\textit{x}}}}
\newcommand{\z}{{\textbf{\textit{z}}}}
\newcommand*{\stack@relbin}[3][]{%
  \mathop{#3}\limits
  \toks@{#1}%
  \edef\reserved@a{\the\toks@}%
  \ifx\reserved@a\@empty\else_{#1}\fi
  \toks@{#2}%
  \edef\reserved@a{\the\toks@}%
  \ifx\reserved@a\@empty\else^{#2}\fi
  \egroup
}%
\renewcommand*{\stackrel}{%
  \mathrel\bgroup\stack@relbin
}
\newcommand*{\stackbin}{%
  \mathbin\bgroup\stack@relbin
}
\begin{document}

\title{On the smallest eigenvalues of covariance matrices of multivariate spatial processes}

\author{Fran\oldc{c}ois Bachoc, Reinhard Furrer \\
Toulouse Mathematics Institute, University Paul Sabatier, France \\
Institute of Mathematics and Institute of Computational Science, \\
 University of Zurich, Switzerland}

\maketitle

{\bf Abstract:}
There has been a growing interest in providing models for multivariate spatial processes. A majority of these models specify a parametric matrix covariance function. Based on observations, the
parameters are estimated by maximum likelihood or variants
thereof. While the asymptotic properties of maximum likelihood
estimators for univariate spatial processes have been analyzed in
detail, maximum likelihood estimators for multivariate spatial
processes have not received their deserved attention yet. In this article we consider the classical increasing-domain asymptotic setting restricting the minimum distance between the locations.
Then, one of the main components to be studied from a theoretical point of
view is the asymptotic positive definiteness of the underlying covariance matrix. Based on very weak assumptions on the matrix covariance function we show that the smallest eigenvalue of the covariance matrix is asymptotically bounded away from zero. Several practical implications are discussed as well.

{\bf Keywords:}increasing-domain asymptotics; matrix covariance function;
  maximum likelihood; multivariate process; spectral representation;
  spectrum

\maketitle

\section{Introduction}
The motivation of this work is the question of what simple conditions
on the sampling design and covariance functions have to be imposed for
a particular likelihood estimator to be consistent or, say, asymptotically normal, in the setting of
multivariate spatial processes.  More precisely, let
\begin{align}
  \bigl\{ Z_k(\s): \s\in\cD\subset\IR^d,   1 \leq k \leq p  \bigr\}
\label{eq:process}
\end{align}
be a multivariate stationary random process, for some fixed
$d\in\IN^+$ and $p\in\IN^+$. Without loss of generality, we assume
that the process~\eqref{eq:process} has zero mean and a matrix
covariance function of the form $\C(\h)= \{c_{k\ell}(\h)\}_{1 \leq
  k,\ell \leq p}$.  The diagonal elements $c_{kk}$ are called
direct covariance (functions) and the off-diagonal entries
$c_{k\ell}$, $k\neq l$, are called cross covariance (functions).

Consider $p$ sets $(\x^{(1)}_i)_{1 \leq i \leq n_1},\dots,(\x^{(p)}_i)_{1 \leq i \leq n_p}$ of points in $\cD$. In the geostatistical
literature, we often denote these points as locations and assume
that each $Z_k(\s)$ is observed at $(\x^{(k)}_i)_{1 \leq i \leq n_k}$.
One important problem is to estimate the matrix covariance function from these observations. One common practice is to assume that this function belongs to a given parametric set, and to estimate the corresponding covariance parameters by maximum likelihood approaches. These approaches are now largely implementable in practice, with the availability of massive computing power. In contrast, finite sample
and asymptotic properties of these approaches for multivariate processes are still on the research agendas.

Currently, the majority of the asymptotic results are specific to the univariate case, where $d=1$ and $n_1 = n \to \infty$. While two types of asymptotic settings exist, fixed-domain and increasing-domain \citep{stein99interpolation}, we shall focus throughout this article on the increasing-domain setting. 
In a seminal article, \cite{mardia84maximum} give conditions which warrant consistency and asymptotic normality of the maximum likelihood estimator.
Additional results and conditions are provided in \cite{bachoc14asymptotic} for the maximum likelihood estimator and in \cite{shaby12tapered} for the tapered maximum likelihood estimator. A central quantity in these three references is the covariance matrix of the observation vector. In particular, a necessary condition for the results above is that the smallest eigenvalue of this covariance matrix be bounded away from zero as $n \to \infty$. This seemingly difficult-to-check condition is guaranteed by much simpler and non-restrictive conditions applying only on the covariance function, provided that there exists a fixed minimal distance between any two different observation points, as shown by \cite{bachoc14asymptotic}.

The picture is similar in the multivariate setting ($d >1$), but currently incomplete. The recent references \cite{Beli:etal:15} and \cite{furrer15asymptotic} provide asymptotic results for maximum likelihood approaches, and require that the smallest eigenvalue of the covariance matrix of the observations be bounded away from zero as $n \to \infty$. However, this condition has currently only been shown to hold for some specific covariance functions and regular grids, see \cite{Beli:etal:15}. 

In this article, we show that this condition holds in general, provided that the minimal distance between two different observation points of the same process - $\x^{(k)}_i$ and $\x^{(k)}_j$ for $i \neq j$ - is bounded away from zero as $n_1,...,n_p \to \infty$, and provided that the matrix covariance function satisfies certain non-restrictive conditions generalizing those of \cite{bachoc14asymptotic}. 
While the starting argument is similar as in Proposition D.4 of
\cite{bachoc14asymptotic} the proof here requires different and more
elaborate tools based on complex matrices and Fourier functions. Our approach allows a proof in both cases, collocated and  non-collocated observations. We also show that the lower bound we provide can be made uniform over parametric families of matrix covariance functions.
We perceive this article as the first
step towards a rigorous analysis of the asymptotic properties of various
maximum likelihood type estimators for multivariate spatial processes
in the framework of increasing-domain asymptotics.

The next section introduces the notation and states the main result.
Due to the interesting tools used in the proof, it is of interest
on its own and we have kept it in the main part of the article. Section \ref{section:conclusion}
concludes with some remarks.

\section{Notation and main result}

For $\x \in \IC^m$, we let $|\x| =
\max_{1 \leq i \leq m} |x_i|$, where $|z|$ is the modulus of a complex number $z$.
Recall that any Hermitian complex matrix $\bfM$ of size $m \times m$ has real eigenvalues that we write $\lambda_1( \bfM ) \leq \dots \leq \lambda_m( \bfM ) $. For a complex vector $\v$ of size $m \times 1$, we let $\v^* = \bar{\v}^t$ be the transpose of its conjugate vector and we let $||\v||^2 = \v^* \v$. The next assumption is satisfied for most standard covariance and cross covariance functions.

\begin{ass} \label{ass:decay:infinity}
There exists a finite fixed constant $A > 0$ and a fixed constant $\tau >0$ so that the functions $c_{k\ell}$ satisfy, for all
$\x \in \IR^d$,
\begin{equation} \label{eq: controleCov}
\left| c_{k\ell}(\x) \right| \leq \frac{A}{ 1+|\x|^{d+\tau} }.
\end{equation}
\end{ass}

We define the Fourier transform of a function $g: \IR^d \to \IR$ by $\hat{g}(\f) = (2 \pi)^{-d} \int_{\IR^d} g(\x) e^{- \imath \f \cdot \x } d\x$,
where $\imath^2 = -1$.
Then, from \eqref{eq: controleCov}, the covariance functions $c_{k\ell}$ have Fourier transforms $\hat{c}_{k\ell}$ that are continuous and bounded. Also, note that, for any $\f \in \IR^d$, $\hat{\C}(\f) = \{ \hat{c}_{k\ell}(\f)\}_{1 \leq k,\ell \leq p}$ is a Hermitian complex matrix, that have real non-negative eigenvalues $0 \leq \lambda_1\{ \hat{\C}(\f)\} \leq \dots \leq \lambda_p\{ \hat{\C}(\f)\} $. We further assume the following.

\begin{ass} \label{ass:fourier}
We have 
\begin{equation} \label{eq:inverse:fourier}
c_{k\ell}(\x) = \int_{\IR^d} \hat{c}_{k\ell}(\f)  e^{ \imath \f \cdot \x } d\f.
\end{equation}
Also, we have $0 < \lambda_1\{ \hat{\C}(\f)\}$ for all $\f \in \IR^d$.
\end{ass}

The condition \eqref{eq:inverse:fourier} is very weak and satisfied by most standard covariance and cross covariance functions. On the other hand, the condition $0 < \lambda_1\{ \hat{\C}(\f)\}$ for all $\f \in \IR^d$ is less innocuous, and is further discussed in Section \ref{section:conclusion}.

Let $d \in \IN^+$ and $p \in \IN^+$ be fixed.
Consider $p$ sequences $(\x^{(1)}_i)_{i \in \IN^+},\dots,(\x^{(p)}_i)_{i \in \IN^+}$ of points in $\IR^d$, for which we assume the following.

\begin{ass} \label{ass:Delta}
There exists a fixed $\Delta >0$
so that for all $k$, $\inf_{i,j \in \IN^+;i \neq j} | \x^{(k)}_i - \x^{(k)}_j | \geq \Delta$.
\end{ass}

For all $n_1,\dots,n_p \in \IN^+$, let, for $0 \leq k \leq p$, $N_k = n_1+\dots+n_k$, with the convention that $N_0 = 0$. Let also $N = N_p$. Note that $N_1,\dots,N_p$ depend on $n_1,\dots,n_p$ but that we do not explicitly write this dependence for concision. Then, let $\bSigma$ (also depending on $n_1,\dots,n_p$) be the $N \times N$ covariance
matrix, filled as follows: For $a = N_{k-1} + i$ and $b = N_{\ell-1} + j$,
with $1 \leq k,\ell \leq p$, $1 \leq i \leq n_k$ and $1 \leq j \leq n_l$, $\sigma_{ a b} =
c_{k\ell}(\x^{(k)}_i-\x^{(\ell)}_j)$. 

Our main result is the following.

\begin{thm} \label{thm}
Assume that Assumptions \ref{ass:decay:infinity}, \ref{ass:fourier} and \ref{ass:Delta} are satisfied. Then, we have
\[
\inf_{n_1,\dots,n_p \in \IN^+} \lambda_1(\bSigma) > 0.
\]
\end{thm}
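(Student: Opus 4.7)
The plan is to exhibit a constant $c > 0$, depending only on the matrix covariance function, on $d$, and on $\Delta$, such that $\v^* \bSigma \v \geq c\, \v^* \v$ for every $\v \in \IC^N$ and every choice of $n_1, \ldots, n_p$. Writing $\v$ in process-wise blocks $\v^{(k)} = (v_i^{(k)})_{i=1}^{n_k}$ and substituting the spectral representation of Assumption \ref{ass:fourier} for each entry $c_{k\ell}(\x^{(k)}_i - \x^{(\ell)}_j)$ of $\bSigma$, interchanging the finite double sum with the absolutely convergent integral, gives the starting identity
\[
\v^* \bSigma \v \;=\; \int_{\IR^d} \w(\f)^* \hat{\C}(\f) \w(\f)\, d\f, \qquad w_k(\f) := \sum_{i=1}^{n_k} v_i^{(k)} e^{-\imath \f \cdot \x^{(k)}_i},
\]
whose integrand is nonnegative since every $\hat{\C}(\f)$ is Hermitian positive semidefinite.

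Next I would localize in frequency using a bump function. Fix a smooth, even, nonnegative, compactly supported $\psi : \IR^d \to \IR_+$ with $\int \psi > 0$, and set $\psi_\eta(\f) := \psi(\f/\eta)$, supported in $K_\eta := \eta\,\mathrm{supp}(\psi)$, for a scale $\eta > 0$ to be tuned. Assumption \ref{ass:decay:infinity} makes $\hat{\C}$ continuous in $\f$, so $\lambda_1(\hat{\C}(\cdot))$ is continuous, and compactness of $K_\eta$ together with Assumption \ref{ass:fourier} yields $m_\eta := \inf_{\f \in K_\eta} \lambda_1(\hat{\C}(\f)) > 0$. Using $\psi_\eta/\|\psi\|_\infty \leq \1_{K_\eta}$ and expanding $|w_k(\f)|^2$ as a double sum, I obtain
\[
\v^* \bSigma \v \;\geq\; m_\eta \int_{K_\eta} \sum_{k=1}^{p} |w_k(\f)|^2\, d\f \;\geq\; \frac{m_\eta}{\|\psi\|_\infty} \sum_{k=1}^{p} \v^{(k)*} T^{(k)}_\eta \v^{(k)},
\]
where $T^{(k)}_\eta$ is the real symmetric matrix with entries $\tilde{\psi}_\eta(\x^{(k)}_i - \x^{(k)}_j)$ and $\tilde{\psi}_\eta(\x) := \int \psi_\eta(\f) e^{\imath \f \cdot \x} d\f = \eta^d \tilde{\psi}(\eta \x)$.

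The main obstacle, as I see it, is to bound $\lambda_1(T^{(k)}_\eta)$ from below uniformly in $k$ and $n_k$. Gershgorin applied to the real symmetric $T^{(k)}_\eta$ gives
\[
\lambda_1\bigl(T^{(k)}_\eta\bigr) \;\geq\; \tilde{\psi}_\eta(\0) \;-\; \max_{1 \leq i \leq n_k} \sum_{j \neq i} \bigl|\tilde{\psi}_\eta(\x^{(k)}_i - \x^{(k)}_j)\bigr|,
\]
with diagonal contribution $\tilde{\psi}_\eta(\0) = \eta^d \tilde{\psi}(\0) > 0$. To control the off-diagonal sum I would combine (i) the Schwartz decay of $\tilde{\psi}$, which gives $|\tilde{\psi}(\u)| \leq C_M (1+|\u|)^{-M}$ for any $M$, with (ii) the $\Delta$-separation of Assumption \ref{ass:Delta}: a dyadic packing argument in $\IR^d$ bounds the number of indices $j$ with $2^s\Delta \leq |\x^{(k)}_j - \x^{(k)}_i| < 2^{s+1}\Delta$ by $C\cdot 2^{sd}$, uniformly in $i$ and $k$. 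Summing over $s \geq 0$ with any fixed $M > d$ produces $\sum_{j \neq i} |\tilde{\psi}_\eta(\x^{(k)}_i - \x^{(k)}_j)| \leq C'\, \eta^{d-M} \Delta^{-M}$, which is smaller than $\tfrac12 \eta^d \tilde{\psi}(\0)$ once $\eta$ is taken sufficiently large (depending only on $d$, $\Delta$, and $\psi$). Fixing such an $\eta$ gives $\lambda_1(T^{(k)}_\eta) \geq \tfrac12 \eta^d \tilde{\psi}(\0)$ uniformly, and chaining with the preceding inequalities yields the theorem with $c = m_\eta\, \eta^d\, \tilde{\psi}(\0)/(2\|\psi\|_\infty) > 0$.
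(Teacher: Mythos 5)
Your proposal is correct and follows essentially the same strategy as the paper's proof: an auxiliary function whose Fourier transform is a compactly supported bump, scaled so that Gershgorin plus the $\Delta$-separation bounds the block-diagonal ``instrumental'' matrix below by half its diagonal, combined with $\inf_{\f \in K} \lambda_1\{\hat{\C}(\f)\} > 0$ on the bump's compact support to pass to $\bSigma$. The only differences are cosmetic (you run the chain of inequalities from $\v^*\bSigma\v$ downward rather than from the instrumental matrix upward, and you spell out the dyadic packing bound that the paper delegates to Lemma D.2 of the cited reference).
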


\begin{proof}[Proof of Theorem \ref{thm}]
From the proof of Proposition D.4 in \cite{bachoc14asymptotic}, there exists a function $\hat{h}: \IR^d \to \IR^+$ that is $C^{\infty}$ with compact support $[0,1]^d$ and so that there exists a function $h: \IR^d \to \IR$ which satisfies $h(0) >0$,
\[
|h(\x)| \leq \frac{A}{ 1+|\x|^{d+\tau} }  \hspace{2cm} \tau >0, 
\]
with the notation of \eqref{eq: controleCov}, and
\[
h(\x) = \int_{\IR^d} \hat{h}(\f)  e^{ \imath \f \cdot \x } d\f.
\]

It can be shown, similarly as in the proof of Lemma D.2 in \cite{bachoc14asymptotic}, that there exists a fixed $0 < \delta < \infty$  so that 
\begin{equation} \label{eq: in_proof_minoration_eigen_values_2} 
\sup_{1 \leq k \leq p, n_k \in \IN^+,1 \leq i \leq n_k} \,\,\sum_{1 \leq j \leq n_k; j \neq i} \,\left| h\left\{ \delta \big( \x^{(k)}_i - \x^{(k)}_j \big)\right\}  \right| \leq  \frac{1}{2} h\left(0\right).
\end{equation}

For all $1 \leq k \leq p$,
using Gershgorin circle theorem, the eigenvalues of
the $n_k \times n_k$ symmetric matrix $\big[ h\big\{ \delta \big( \x^{(k)}_i - \x^{(k)}_j \big) \big\} \big]_{1 \leq i,j \leq n_k}$ belong to the balls with center $h(0)$ and radius 
$\sum_{1 \leq j \leq n_k, j \neq i} \big| h\big\{ \delta \big( \x^{(k)}_i - \x^{(k)}_j \big) \big\} \big|$. Thus, because of \eqref{eq: in_proof_minoration_eigen_values_2}, these eigenvalues belong to the segment $[ h(0) - (1/2) h(0) , h(0) + (1/2) h(0)]$ and are larger than $(1/2) h(0)$. Hence, the $N \times N $ matrix $\bfT$ defined as being block diagonal, with $p$ blocks and with block $k$ equal to $ \big[ h\big\{ \delta \big( \x^{(k)}_i - \x^{(k)}_j \big)\big\} \big]_{1 \leq i,j \leq n_k}$, also has eigenvalues larger than $(1/2)h(0)$.

Consider now a real vector $\v$ of size $N \times 1$. Then, because $\bfT$ has eigenvalues larger than $(1/2) h(0)$,
\begin{equation} \label{eq:instrumental:large:enough}
 \frac{1}{2} h\left(0\right) \sum_{k=1}^p \sum_{i=1}^{n_k} | v_{N_{k-1} + i} |^2  \leq \sum_{k=1}^p \sum_{i,j=1}^{n_k}  v_{N_{k-1} + i} v_{N_{k-1} + j} h\left\{ \delta  \big( \x^{(k)}_i - \x^{(k)}_j \big)\right\} .
 \end{equation}

Now, define the $Np \times 1$ real vector $\w$ as follows: The $N$ first components are
$$
v_1,\dots,v_{n_1},\underbrace{0,\dots,0,}_{\mbox{$N - n_1$ times}}
$$
the components $N+1$ to $2N$ are
$$
\underbrace{0,\dots,0,}_{\mbox{$n_1$ times}}~v_{n_1+1},\dots,v_{n_1+n_2},\underbrace{0,\dots,0,}_{\mbox{$N - n_1 - n_2$ times}}
$$
and so on, until the $N$ last components are
$$
\underbrace{0,\dots,0,}_{\mbox{$N_{p-1}$ times}}\,v_{N_{p-1}+1},\dots,v_{N_{p-1}+n_p}.
$$
Let also $\{ \s_1,\dots,\s_N\}$ be defined as $\{ \x^{(1)}_1,\dots,\x^{(1)}_{n_1},\x^{(2)}_1,\dots,\x^{(2)}_{n_2},\dots,\x^{(p)}_1,\dots,\x^{(p)}_{n_p} \}$. Then we have, from \eqref{eq:instrumental:large:enough},
$$
\frac{1}{2} h\left(0\right) \sum_{k=1}^p \sum_{i=1}^{n_k} | v_{N_{k-1} + i} |^2  \leq
\sum_{k=1}^p \sum_{i,j=1}^N w_{(k-1)N + i} w_{(k-1)N + j} h\left\{ \delta  ( \s_i - \s_j )\right\} .
$$
Let, for $1 \leq i \leq N$, $\w_{i \bullet}$ be the $p \times 1$ vector $ (w_{(k-1)N+i})_{1 \leq k \leq p} $. Then, we can write
\begin{eqnarray} \label{eq:instrumental}
 \frac{1}{2} h\left(0\right) \sum_{k=1}^p \sum_{i=1}^{n_k} | v_{N_{k-1} + i} |^2 & \leq & \sum_{i,j=1}^N \sum_{k=1}^p w_{(k-1)N + i} w_{(k-1)N + j} \frac{1}{\delta^d} \int_{\IR^d} \hat{h}\left(\frac{\f}{\delta}\right) e^{ \imath \f \cdot ( \s_i - \s_j ) } d \f \nonumber \\
& = &  \frac{1}{\delta^d} \int_{\IR^d} \hat{h}\left(\frac{\f}{\delta}\right) \sum_{i,j=1}^N
( e^{ - \imath \f \cdot \s_i } \w_{i \bullet} )^* ( e^{ - \imath \f \cdot \s_j } \w_{j \bullet} )  d \f \nonumber \\
& = &  \frac{1}{\delta^d} \int_{\IR^d} \hat{h}\left(\frac{\f}{\delta}\right) \left| \left| \sum_{i=1}^N  e^{ - \imath \f \cdot \s_i } \w_{i \bullet}  \right| \right|^2 d \f.
\end{eqnarray}
Let $E$ be the set of $\f$ so that $\hat{h}\left(\f / \delta \right)$ is non-zero. Then $E$ is bounded since $\hat{h}$ has a compact support. Hence, because the Hermitian matrix $\hat{\C}(\f)$ has strictly positive eigenvalues for all $\f \in \IR^d$ and is continuous, we have $\inf_{\f \in E} \lambda_1\{ \hat{\C}(\f)\} >0$. Also, because $\hat{h}$ is continuous, $\sup_{\f \in E} \hat{h} ( \f / \delta ) < + \infty$. Hence, there exists a fixed $\delta_2 >0$ so that for any $\z \in \IC^{p}$, $\f \in \IR^d$, $ \z^* \hat{\C}(\f) \z \geq \delta_2 \hat{h}\left(\f / \delta \right) || \z ||^2$. Hence, from \eqref{eq:instrumental},
\begin{eqnarray*}
 \frac{1}{2} h\left(0\right) \sum_{k=1}^p \sum_{i=1}^{n_k} | v_{N_{k-1} + i} |^2 & \leq &  \frac{1}{\delta^d \delta_2} \int_{\IR^d} \left( \sum_{i=1}^N e^{ - \imath \f \cdot \s_i } \w_{i \bullet}  \right)^* \hat{\C} (\f) \left( \sum_{i=1}^N e^{ - \imath \f \cdot \s_i } \w_{i \bullet}  \right) d \f \\
& = & \frac{1}{\delta^d \delta_2} \sum_{i,j=1}^N \sum_{k,\ell=1}^p w_{(k-1)N+i} w_{(\ell-1)N+j}  \int_{\IR^d} \hat{c}_{k\ell}(\f) e^{ \imath \f \cdot ( \s_i - \s_j) } d \f  \\
& = & \frac{1}{\delta^d \delta_2} \sum_{k,\ell=1}^p \sum_{i,j=1}^N w_{(k-1)N+i} w_{(\ell-1)N + j}
c_{k\ell}(\s_i - \s_j) \\
& = & \frac{1}{\delta^d \delta_2} \v^t \bSigma \v.
\end{eqnarray*}
This concludes the proof.
\end{proof}

We now extend Theorem \ref{thm} to the case of a parametric family of covariance and cross covariance functions $\{c_{\btheta k\ell}(\h)\}_{1 \leq k,\ell \leq p}$, indexed by a parameter $\btheta$ in a compact set $\Theta$ of $\IR^q$. Let $\hat{\C}_{\btheta}(\f)$ be as $\hat{\C}(\f)$ with $\{c_{k\ell}(\h)\}_{1 \leq k,\ell \leq p}$ replaced by $\{c_{\btheta k\ell}(\h)\}_{1 \leq k,\ell \leq p}$.
The proof of the next theorem is identical to that of Theorem \ref{thm}, up to more cumbersome notations, and is omitted.

\begin{thm} \label{thm:deux}
Assume that, for all $\btheta \in \Theta$, the functions $\{c_{\btheta k\ell}(\h)\}_{1 \leq k,\ell \leq p}$ satisfy Assumption \ref{ass:decay:infinity}, where $A$ and $\tau$ can be chosen independently of $\btheta$.
Assume that \eqref{eq:inverse:fourier} holds with $\{c_{k\ell}(\h)\}_{1 \leq k,\ell \leq p}$ replaced by $\{c_{\btheta k\ell}(\h)\}_{1 \leq k,\ell \leq p}$, for all $\btheta \in \Theta$. Assume that $\hat{\C}_{\btheta}(\f)$ is jointly continuous in $\f$ and $\btheta$ and that $\lambda_1\{\hat{\C}_{\btheta}(\f)\} >0$ for all $\f$ and $\btheta$. Assume finally that Assumption \ref{ass:Delta} is satisfied.

Then, with $\bSigma_{\btheta}$ being as $\bSigma$ with $\{c_{k\ell}(\h)\}_{1 \leq k,\ell \leq p}$ replaced by $\{c_{\btheta k\ell}(\h)\}_{1 \leq k,\ell \leq p}$, we have
\[
\inf_{\btheta \in \Theta,n_1,\dots,n_p \in \IN^+} \lambda_1(\bSigma_{\btheta}) > 0.
\]
\end{thm}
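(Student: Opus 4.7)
The plan is to follow the proof of Theorem \ref{thm} line by line, monitoring at each step which constants depend on $\btheta$ and which do not. First, I would select the auxiliary function $h$ and its Fourier transform $\hat{h}$ exactly as in that proof. Their construction (via Proposition D.4 of \cite{bachoc14asymptotic}) depends only on $d$, $A$, and $\tau$, all of which are assumed to be fixed uniformly across $\btheta \in \Theta$. The scaling constant $\delta > 0$ that enforces \eqref{eq: in_proof_minoration_eigen_values_2} is a function of the sampling design and of $h$ only, hence also independent of $\btheta$. It follows that the block-diagonal matrix $\bfT$, together with its eigenvalue lower bound $(1/2)h(0)$, remains unchanged, so that the inequalities \eqref{eq:instrumental:large:enough} and \eqref{eq:instrumental} transfer verbatim to the parametric setting.

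The parameter $\btheta$ enters the argument only at the step that upgrades \eqref{eq:instrumental} into a quadratic form involving $\bSigma$, through the pointwise matrix inequality
\[
\z^* \hat{\C}_{\btheta}(\f)\, \z \,\geq\, \delta_2 \, \hat{h}(\f/\delta)\, ||\z||^2 \qquad \forall\, \btheta \in \Theta,\, \f \in \IR^d,\, \z \in \IC^p.
\]
The crux of the proof is therefore to exhibit a single constant $\delta_2 > 0$ for which this inequality holds simultaneously for every $\btheta \in \Theta$.

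To produce such a $\delta_2$, I would set $E = \{\f \in \IR^d : \hat{h}(\f/\delta) \neq 0\}$. Because $\hat{h}$ is supported on $[0,1]^d$, the closure $\overline{E}$ is contained in the compact set $\delta [0,1]^d$. The joint continuity of $(\f, \btheta) \mapsto \hat{\C}_{\btheta}(\f)$ then implies that $\lambda_1\{\hat{\C}_{\btheta}(\f)\}$ is continuous, hence attains its infimum, on the compact set $\overline{E} \times \Theta$. Combined with the assumed strict positivity $\lambda_1\{\hat{\C}_{\btheta}(\f)\} > 0$, this gives $m := \inf_{\overline{E} \times \Theta} \lambda_1\{\hat{\C}_{\btheta}(\f)\} > 0$. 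Setting also $M := \sup_{\overline{E}} \hat{h}(\f/\delta) < \infty$, the choice $\delta_2 = m/M$ yields the required bound on $E$ via $\z^* \hat{\C}_{\btheta}(\f) \z \geq \lambda_1\{\hat{\C}_{\btheta}(\f)\} ||\z||^2$; outside $E$ the right-hand side vanishes while the left-hand side is nonnegative by positivity of $\hat{\C}_{\btheta}(\f)$, so the inequality is automatic.

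Equipped with this uniform $\delta_2$, the computation concluding the proof of Theorem \ref{thm}---substituting $\hat{\C}_{\btheta}$ for $\hat{\C}$ in \eqref{eq:instrumental} and applying \eqref{eq:inverse:fourier} (valid for each $\btheta$ by hypothesis) to rebuild the entries $c_{\btheta k \ell}$---proceeds without change and produces $\v^t \bSigma_{\btheta} \v \geq (\delta^d \delta_2 / 2)\, h(0)\, ||\v||^2$ for every $\v \in \IR^N$ and every $\btheta \in \Theta$. The conclusion of Theorem \ref{thm:deux} is then immediate. The main obstacle is precisely the extraction of the uniform constant $m > 0$ on $\overline{E} \times \Theta$; this is the only place where joint (as opposed to separate) continuity of $\hat{\C}_{\btheta}(\f)$ in $(\f, \btheta)$, together with compactness of $\Theta$, is indispensable.
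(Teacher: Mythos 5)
Your proposal is correct and matches the paper's intended argument: the paper explicitly omits the proof of Theorem \ref{thm:deux}, stating it is identical to that of Theorem \ref{thm} up to notation, and your write-up carries out exactly that, with the only genuinely new ingredient being the extraction of a uniform $\delta_2$ via joint continuity of $\hat{\C}_{\btheta}(\f)$ and compactness of $\overline{E}\times\Theta$. Your tracking of which constants ($h$, $\hat h$, $\delta$, and hence $\bfT$) are independent of $\btheta$ is accurate, so nothing is missing.
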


The lower bound provided by Theorem \ref{thm:deux} is typically assumed in the references \cite{shaby12tapered}, \cite{Beli:etal:15} and \cite{furrer15asymptotic}.

\section{Concluding remarks} \label{section:conclusion}

Note that Theorem \ref{thm} is typically applicable for models of stochastic processes with discrete definition spaces, such as time series or Gauss-Markov random fields. Indeed, these models incorporate a fixed minimal distance between any two different observation points.

It is well-known (e.g. \citealp{wackernagel03multivariate}) that covariance and cross covariance functions $\{c_{k\ell}(\h)\}_{1 \leq k,\ell \leq p}$ satisfy $\lambda_1\{\hat{\C}(\f)\} \geq 0$ for all $\f$. It is also known that if $\lambda_1\{\hat{\C}(\f)\} > 0$ for almost all $\f \in \IR^d$, then $\lambda_1(\bSigma) > 0$ whenever the points $(\x^{(k)}_i)_{1 \leq i \leq n_k}$ are two-by-two distinct for all $k$. We believe that this latter assumption on $\lambda_1\{\hat{\C}(\f)\}$ is nevertheless generally insufficient for Theorem \ref{thm} to hold. Indeed, consider for illustration the univariate and unidimensional case with the triangular covariance function, that is $d=1$, $p=1$ and $c(h) = c_{11}(h) = (1 - |h|) \1_{\{ |h| \leq 1 \}}$. Then (see, e.g., \citealp{stein99interpolation}), $\hat{c}(f) = \sinc^2( f / 2)/(2 \pi )$ has a countable number of zeros.
Consider now the sequence of observation points $(x^{(1)}_i)_{i \in \IN^+} = (x_i)_{i \in \IN^+} = (i/2)_{i \in \IN^+}$. Then, $\bSigma$ is a tridiagonal Toeplitz matrix with $\sigma_{11}=1$ and $\sigma_{12} = \sigma_{21} = 1/2$. Hence, see e.g., \cite{noschese13tridiagonal}, for any value of $n_1 = n$, the eigenvalues of $\bSigma$ are $[1 + \cos\{(i \pi)/(n+1)\}]_{1 \leq i \leq n}$. Thus, although $\lambda_1(\bSigma) = 1 + cos\{(n \pi)/(n+1) \}$ is strictly positive for any $n$, it goes to $0$ as $n \to \infty$. Hence, Assumption \ref{ass:fourier}, stating that $\lambda_1\{\hat{\C}(\f)\}$ is strictly positive for all $\f$, appears to be generally necessary for Theorem~\ref{thm} to hold.

In order to derive increasing-domain asymptotic properties for covariance tapering (e.g. in \citealp{shaby12tapered} or \citealp{Beli:etal:15}), one typically assumes that the smallest eigenvalues of tapered covariance matrices are lower bounded, uniformly in $n_1,\dots,n_p$. A tapered covariance matrix is of the form $\bSigma \circ \A $, where $\circ$ is the Schur (component by component) product and where $\A$ is symmetric positive semi-definite with diagonal components $1$. Because of the relation $\lambda_1( \bSigma \circ \A ) \geq ( \min_{i} A_{ii} ) \lambda_1( \bSigma )$ (see \citealp[Theorem 5.3.4]{horn91topics}), Theorem~\ref{thm} directly provides a uniform lower bound for the smallest eigenvalues of tapered covariance matrices.

~ \\

{\bf Acknowledgment:}Reinhard Furrer  acknowledges support of the UZH Research Priority
Program (URPP) on ``Global Change and Biodiversity'' and the Swiss National Science
Foundation SNSF-143282.

\bibliographystyle{alpha}
\bibliography{bibtex}

\end{document}